\newcommand{\C}{\mathbb{C}}
\newcommand{\R}{\mathbb{R}} 
\newcommand{\N}{\mathbb{N}} 
\newcommand{\Z}{\mathbb{Z}} 
\newcommand{\Q}{\mathbb{Q}}
\newcommand{\HH}{\mathbb{H}}
\renewcommand{\to}{\longrightarrow}
\newtheorem{Thm}{Theorem}[section]		
\newtheorem{Prop}[Thm]{Proposition}
\newtheorem*{Ex}{Example}
\theoremstyle{definition}
\newtheorem{Definition}[Thm]{Definition}
\theoremstyle{remark}
\newtheorem*{rmk}{Remark}
\newtheorem{ind}[]{{\rm\it Indice}}
\title{A note on Schwartz functions and modular forms}
\author[Rolen]{Larry Rolen}
\author[Wagner]{Ian Wagner}
\begin{document}
\numberwithin{equation}{section}

\begin{abstract}
We generalize the recent work of Viazovska by constructing infinite families of Schwartz functions, suitable for Cohn-Elkies style linear programming bounds, using quasi-modular and modular forms.  In particular for dimensions $d \equiv 0 \pmod{8}$ we give the constructions that lead to the best sphere packing upper bounds via modular forms.  In dimension $8$ and $24$ these exactly match the functions constructed by Viazovska and Cohn, Kumar, Miller, Radchenko, and Viazovska which resolved the sphere packing problem in those dimensions.
\end{abstract}

\maketitle

\section{Introduction and statement of results}
The sphere packing problem started in 1611 when Kepler asked for the best way to stack cannonballs in a crate.  This is the dimension $3$ case, but more generally one can ask what proportion of $\R^d$ can be covered with non-overlapping congruent balls.  To be more precise, if $\mathcal{P}$ is a packing, then the \textit{finite density} of $\mathcal{P}$, truncated at some radius $r$, is 
\begin{equation*}
\Delta_{\mathcal{P}}(r) := \frac{\text{Vol}(\mathcal{P} \cap B_{d}(0, r))}{\text{Vol}(B_{d}(0,r))}.
\end{equation*}  
The \textit{density} of $\mathcal{P}$ is then $\Delta_{\mathcal{P}}:= \limsup_{r \to \infty} \Delta_{\mathcal{P}}(r)$ and the \textit{sphere packing constant} is 
\begin{equation}
\Delta_{d} := \sup_{\mathcal{P} \subset \R^d} \Delta_{\mathcal{P}}.
\end{equation}
The sphere packing problem in dimension $d$ is to determine $\Delta_{d}$ for a particular dimension $d$.  Clearly we have $\Delta_{1} =1$, and in 1892 Thue \cite{Th} exactly computed $\Delta_{2} \approx 0.9068$ by proving the hexagonal packing corresponding to the $A_{2}$ lattice is optimal for $d=2$.  It wasn't until 1998 that Kepler's original question was answered; Hales \cite{H} showed that $\Delta_{3} = \frac{\pi}{\sqrt{18}} \approx 0.7405$.  Recently a breakthrough was made by Cohn and Elkies that showed solving the sphere packing problem in dimensions $8$ and $24$ was within reach. Essentially, they showed that the proof could be reduced to the construction of special functions satisfying linear programming bounds, where one needs to control the size of the function and its Fourier transform simultaneously. To recall their results, we define the \textit{Fourier transform} of an ${\rm{L}}^{1}$ function $f: \R^{d} \to \C$ by
\begin{equation}
\mathcal{F}(f)(y) = \widehat{f}(y) := \int_{\R^{d}} f(x) e^{-2 \pi i \langle x, y \rangle} dx \qquad y \in \R^{d},
\end{equation}
where $\langle x, y \rangle$ is the standard scalar product in $\R^{d}$.  A classic example is that the Fourier transform of the Gaussian is another Gaussian
\begin{equation*}
\mathcal{F} \left( e^{-\alpha x^2} \right) = \left( \frac{\pi}{\alpha} \right)^{\frac{d}{2}} e^{-\frac{\pi^2 y^2}{\alpha}}.
\end{equation*}

Given a lattice $\Lambda$ with shortest nonzero vector of length $r_{0}$, the density of the corresponding lattice packing is
\begin{equation*}
\Delta_{d}:=\frac{\pi^{d/2}}{\Gamma \left(\frac{d}{2} +1 \right)} \left(\frac{r_{0}}{2} \right)^{d} \frac{1}{|\Lambda|}.
\end{equation*}
A function $f(x)$ is a \textit{Schwartz function} if $f$ and all of its derivatives decay to zero faster than any inverse power of $x$.  In \cite{CE} Cohn and Elkies show that if $\Lambda$ is a self dual lattice with shortest nonzero vector of length $r_{0}$ and $f: \R^{d} \to \R$ is a Schwartz function satisfying the following two conditions:
\begin{enumerate}
\item $f(x) \leq 0$ for all $|x| \geq r_{0}$,
\item $\widehat{f}(x) \geq 0$ for all $x \in \R^d$,
\end{enumerate}
then 
\begin{equation*}
\Delta_{d} \leq \frac{\pi^{d/2}}{\Gamma \left(\frac{d}{2} +1 \right)} \left(\frac{r_{0}}{2} \right)^{d} \frac{f(0)}{\widehat{f}(0)}.
\end{equation*}
This kind of result is known as a \textit{linear programming bound}.  Cohn and Elkies constructed functions for $4 \leq d \leq 36$ which, when combined with their theorem, led to the  best known upper bounds for sphere packing in those dimensions.  In particular, they showed that the upper bound in dimensions $8$ and $24$ was extremely close to the known lower bound, which provided evidence that there existed functions which would resolve the sphere packing problem in those dimensions.  

In 2017 \cite{V} Viazovska explicitly constructed such a function for $d=8$ using special modular forms and quasi-modular forms which implied that the $E_{8}$ lattice packing is optimal in $8$ dimensions.  Her methods were quickly modified by Cohn, Kumar, Miller, Radchenko, and Viazovska to show that the Leech lattice packing is optimal for $d=24$ \cite{ckmrv}.  The main ideas behind the proofs of these theorems was to split the problem of constructing $f$ into constructing a function $f_{+}$ which is a $+1$ eigenfunction for the Fourier transform and $f_{-}$ which is a $-1$ eigenfunction for the Fourier transform.  Letting $f$ be a linear combination of these two functions allows control over the necessary inequalities.  The Poisson Summation Formula also tells us that in order for the function $f$ to resolve the sphere packing problem in a given dimension it also needs to have zeros of specific orders at specific points.  To be precise, if $r_{0}$ is the shortest vector length in a lattice packing, then $f(x)$ must have double zeros at all lattice points $|x| > r_{0}$ and a simple zero when $|x|=r_{0}$.

For other dimensions not much is known about the sphere packing densities.  There are conjectures for optimal packings in small dimensions, but few results have been proven.  The best known lower bound is due to Venkatesh \cite{Ve} and gives
\begin{equation*}
\Delta_{d} \geq \frac{e^{-\gamma}}{2} \log \log d \cdot 2^{-d},
\end{equation*}
but is only true for a sparse sequence of dimensions.  The best known upper bound has not been improved since 1978 when Kabatiansky and Levenshtein \cite{KL} proved
\begin{equation*}
\Delta_{d} \leq 2^{-0.599d}.
\end{equation*}
There are problems related to sphere packing for which Viazovska's construction may be useful.  In particular, sphere packing is just a special case of an energy optimization problem.  Cohn, Kumar, Miller, Radchenko, and Viazovska \cite{ckmrv2} recently used related methods to prove that the $E_{8}$ lattice and the Leech lattice are universally optimal (see \cite{CK} for definition) in $8$ and $24$ dimensions respectively.   

There is hope that the techniques developed to prove the sphere packing and energy optimization results given above can be used to attack related problems.  Here we generalize Viazovska's result and construct Schwartz functions using special quasi-modular and modular forms.  We can completely determine the zeros of these functions and how they behave under the Fourier transform.
\begin{Thm} \label{Schwartz}
For each dimension $d \equiv 0 \pmod{4}$ there exists a radial Schwartz function $f: \R^{d} \to \R$ and an $n \in \N$ such that
\begin{itemize}
\item $f(x) = f_{+}(x) + f_{-}(x)$ and $\widehat{f}(x) = (-i)^{-\frac{d}{2}}(f_{+}(x) - f_{-}(x))$,
\item $f(\sqrt{2n}) = 0$ and $f'(\sqrt{2n}) \neq 0$,
\item $f(\sqrt{2m}) = f'(\sqrt{2m}) = 0$ for $m>n$.
\item $f(\sqrt{2m})$ are explicitly computable for  $0 \leq m < n$,
\end{itemize}
\end{Thm}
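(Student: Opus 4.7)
My plan is to generalize Viazovska's construction for $d=8$: build $f_\pm$ as Laplace-type contour integrals
$$ f_\pm(x) = \int_0^{i\infty} \varphi_\pm(z)\, e^{\pi i z |x|^2}\, dz, $$
of suitable (quasi-)modular forms $\varphi_\pm$ of weight $k = 2 - d/2$ on a congruence subgroup of $\SL_2(\Z)$. Using the $d$-dimensional Fourier transform of the Gaussian-like kernel $e^{\pi i z |x|^2}$, which equals $(-iz)^{-d/2} e^{-\pi i |y|^2/z}$, and the substitution $z \mapsto -1/z$, a direct calculation shows $\widehat{f_\pm}(y) = -\varepsilon_\pm (-i)^{-d/2} f_\pm(y)$, where $\varepsilon_\pm \in \{\pm 1\}$ is the eigenvalue of $\varphi_\pm$ under the weight-$k$ slash action of $S \in \SL_2(\Z)$. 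Since $(-i)^{d/2} = (-i)^{-d/2} \in \{\pm 1\}$ when $d \equiv 0 \pmod 4$, choosing $\varepsilon_+ = -1$ and $\varepsilon_- = +1$ gives exactly the Fourier identity in the theorem.

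For each $d \equiv 0 \pmod 4$ the weight satisfies $k = 2 - d/2 \leq 0$, so I would build $\varphi_\pm$ as weakly holomorphic rational expressions in the Eisenstein series $E_2, E_4, E_6$, the discriminant $\Delta$ and the theta constants $\vartheta_{00}^4,\vartheta_{01}^4,\vartheta_{10}^4$, chosen to have the correct weight and $S$-eigenvalue. When $d \equiv 0 \pmod 8$ genuine weakly holomorphic forms on $\SL_2(\Z)$ are enough. For $d \equiv 4 \pmod 8$ one must pass to $\Gamma_0(2)$ and incorporate the quasi-modular $E_2$, arranging for its non-modular term to cancel between $\varphi_+$ and $\varphi_-$, exactly mirroring Viazovska's template in dimension $8$.

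To extract the zero structure I would split the contour at $z=i$ and use the $S$-symmetry of $\varphi_\pm$ to rewrite the piece from $0$ to $i$ as an integral from $i$ to $i\infty$. Then at $|x|^2 = 2m$ the factor $e^{\pi i z\cdot 2m} = q^m$ interacts with the $q$-expansion of $\varphi_\pm$ and $f_\pm(\sqrt{2m})$ comes out as a finite linear combination of principal-part coefficients of $\varphi_\pm$. Consequently, choosing $\varphi_\pm$ with principal parts supported on $q^{-j}$ for $j \leq n$ forces $f(\sqrt{2m}) = 0$ for all $m>n$ and produces explicit values for $m<n$. To also kill the derivatives at $|x|=\sqrt{2m}$ for $m>n$, one multiplies the integrand by the factor $\sin^2(\pi|x|^2/2)$ as in Viazovska's work: this preserves the Fourier eigenvalue identity and upgrades the simple zeros to double zeros everywhere except at $m=n$, where the extremal nonzero principal-part coefficient leaves a simple zero and a nonvanishing derivative.

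The Schwartz property for $f_\pm$ and all of their derivatives follows from the Gaussian decay as $\Im z \to \infty$ and from the $S$-transformation controlling the integrand near $z=0$; derivatives in $x$ only pull out polynomial factors in $z$ that do not spoil convergence. The main obstacle will be the second step: producing, uniformly in every $d \equiv 0 \pmod 4$, a compatible $S$-eigenpair $(\varphi_+,\varphi_-)$ of weight $2-d/2$ with matched principal parts and coherent quasi-modular corrections. This is a finite-dimensional linear algebra problem, but arranging the cancellation of $E_2$-anomalies between $\varphi_+$ and $\varphi_-$, while simultaneously pinning down the largest $n$ for which the vanishing pattern actually holds, is the genuinely delicate part of the construction.
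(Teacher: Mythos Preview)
Your proposal has the right ingredients but the logical architecture has a genuine gap. The function $f_\pm(x)=\int_0^{i\infty}\varphi_\pm(z)e^{\pi iz|x|^2}dz$ is \emph{not} well defined for all $x$: since $\varphi_\pm$ must be weakly holomorphic of nonpositive weight, it has a pole at one cusp, so the integral diverges either near $i\infty$ (for small $|x|$) or near $0$. Thus you cannot first define $f_\pm$ by this integral, prove the eigenfunction identity via $z\mapsto -1/z$, and only afterwards ``multiply the integrand by $\sin^2(\pi|x|^2/2)$.'' Moreover, that multiplication does \emph{not} preserve the Fourier eigenvalue property: $\sin^2(\pi|x|^2/2)$ is not a Fourier eigenfunction multiplier in $\R^d$, so $g(x)\mapsto\sin^2(\pi|x|^2/2)g(x)$ does not commute with $\mathcal F$. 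In the paper (following Viazovska) the $\sin^2$ factor is not inserted; rather one \emph{defines} the Schwartz eigenfunction by a four--piece contour integral (paths from $-1,1,0$ to $i$ and from $i$ to $i\infty$), proves the eigenvalue directly from that definition, and then \emph{derives} the identity with $-4\sin^2(\pi r^2/2)\int_0^{i\infty}$ as a consequence, valid only for $r\ge r_0$. The $\sin^2$ factor is the output, not the input.

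There is a second gap on the $+1$ side. You propose to take $\varphi_+$ to be an $S$--eigenform of weight $2-d/2$; but if $\varphi_+|_kS=\pm\varphi_+$ the four--contour computation collapses and one cannot match both the Schwartz property and the eigenvalue. What the paper actually uses is a \emph{depth--two quasi--modular} form $\phi$ of weight $4-d/2$ on $\SL_2(\Z)$: the depth--two transformation law under $T^{\pm1}S$ is exactly what produces the identity
\[
\phi\!\left(-\tfrac{1}{z-1}\right)(z-1)^{d/2-2}+\phi\!\left(-\tfrac{1}{z+1}\right)(z+1)^{d/2-2}-2\phi\!\left(-\tfrac{1}{z}\right)z^{d/2-2}=2\phi(z),
\]
which is the hinge between the Schwartz/eigenfunction definition and the $\sin^2$ formula. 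Your remark that the $E_2$--anomaly should ``cancel between $\varphi_+$ and $\varphi_-$'' is therefore off target: the quasi--modular correction is not a nuisance to be cancelled but the mechanism that makes the $+1$ construction work, in every $d$, not only $d\equiv4\pmod8$. On the $-1$ side the analogue is a $\Gamma(2)$ form satisfying $\psi_I=\psi_T+\psi_S$ (together with an $f\cdot\mathcal L$ term, $\mathcal L$ a logarithm of the Hauptmodul $\lambda$), again a three--term functional relation rather than a bare $S$--eigenvalue.
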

In order to use these functions for an application we would like to have better control over when exactly the double zeros begin.  For example, to achieve the best sphere packing bound we want to minimize $n$.  The following theorem shows that we have this control when $d \equiv 0 \pmod{8}$.
\begin{Thm} \label{8}
For each $d \equiv 0 \pmod{8}$, let $n_{+} = \left\lfloor \frac{d}{16} + \frac{1}{2} \right\rfloor$ and $n_{-}= \left\lfloor \frac{d}{16} + 1 \right\rfloor$.  Then there exists radial Schwartz functions $f_{\pm}: \R^{d} \to \R$ which satisfy
\begin{itemize}
\item $\widehat{f}_{\pm}(x) = \pm f_{\pm}(x)$. 
\item $f_{\pm}(\sqrt{2n_{\pm}}) =0$ and $f_{\pm}'(\sqrt{2n_{\pm}}) \neq 0$.
\item $f_{\pm}(\sqrt{2m}) = f_{\pm}'(\sqrt{2m}) =0$ for $m > n_{\pm}$.
\item $f_{\pm}(\sqrt{2m})$ are explicitly computable for $0 \leq m < n_{\pm}$. 
\end{itemize}
\end{Thm}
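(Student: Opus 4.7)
The plan is to separate the decomposition $f = f_+ + f_-$ from Theorem~\ref{Schwartz} into its two Fourier eigencomponents and then to pin down the leading Fourier coefficient of the underlying modular form so that each $f_\pm$ has the prescribed zero structure.  Since $d \equiv 0 \pmod 8$, the scalar $(-i)^{-d/2}$ equals $1$, so the second bullet of Theorem~\ref{Schwartz} reads $\widehat{f} = f_+ - f_-$, and hence $f_\pm = (f \pm \widehat{f})/2$ are automatically radial Schwartz functions satisfying $\widehat{f_\pm} = \pm f_\pm$.  The remaining task is to arrange the construction so that the first simple zero of each $f_\pm$ lands at $\sqrt{2n_\pm}$.

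Following Viazovska's template in \cite{V}, I would realize $f_\pm$ as a suitably regularized contour integral
\[
f_\pm(x) \;=\; \sin^2\!\Bigl(\tfrac{\pi |x|^2}{2}\Bigr) \int_{0}^{i\infty} \varphi_\pm(z)\, e^{\pi i z |x|^2}\, dz,
\]
where $\varphi_\pm$ is a weakly holomorphic (quasi-)modular form of weight $k = 2 - d/2$ on a level-$2$ congruence subgroup, holomorphic at the cusp $0$ and lying in the $\pm$ eigenspace of the Fricke involution.  The weight $2-d/2$ is forced by the requirement that the factor $(z/i)^{-d/2}$ produced by Fourier transforming the Gaussian $e^{\pi i z|x|^2}$ be absorbed by the modular transformation of $\varphi_\pm$ under inversion, in which case the Fricke sign of $\varphi_\pm$ becomes the Fourier eigenvalue of $f_\pm$.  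Expanding $\varphi_\pm(z) = \sum_\ell a_\pm(\ell)q^\ell$ and integrating term by term, the analytically continued integral has simple poles in $|x|^2$ exactly at $|x|^2 = -2\ell$ for each $\ell$ with $a_\pm(\ell) \neq 0$.  Combined with the double zeros of $\sin^2(\pi|x|^2/2)$ at every $|x|^2 = 2m$, setting $n_\pm := -\min\{\ell : a_\pm(\ell) \neq 0\}$ yields a double zero of $f_\pm$ at $\sqrt{2m}$ for every $m > n_\pm$, a simple zero at $\sqrt{2n_\pm}$ (since $a_\pm(-n_\pm) \neq 0$ by choice), and an explicitly computable value at $\sqrt{2m}$ for $0 \leq m < n_\pm$ in terms of the finitely many principal-part coefficients.

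Finally, to arrive at the precise formulas $n_+ = \lfloor d/16 + 1/2 \rfloor$ and $n_- = \lfloor d/16 + 1 \rfloor$, I would apply the valence formula on $X_0(2)$ in each Fricke eigenspace: among weight $2 - d/2$ weakly holomorphic forms holomorphic at the cusp $0$, the maximum allowed pole order at $i\infty$ is exactly the stated floor, with the one-unit offset between the two cases reflecting the opposite Fricke signs.  The main obstacle will be twofold.  First, one must exhibit forms $\varphi_\pm$ realizing this maximum pole order with nonvanishing leading Fourier coefficient; this should be done by writing down an explicit basis of the relevant space in terms of standard generators on $\Gamma_0(2)$ (Eisenstein series and theta products), generalizing Viazovska's weight $-2$ forms at $d = 8$.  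Second, one must verify that the regularized integral actually defines a Schwartz function, in particular showing that any pole of the integral at $|x|^2 = 0$ combines with the $\sin^2$ prefactor to yield a finite value, and that the quasi-modular correction terms (needed once the weight is sufficiently negative) do not spoil rapid decay at infinity — a check that was made case-by-case by Viazovska and by Cohn--Kumar--Miller--Radchenko--Viazovska but which here must be carried out uniformly in $d$.
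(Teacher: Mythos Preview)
Your framework has a structural problem: a weakly holomorphic form on $\Gamma_0(2)$ lying in a Fricke eigenspace cannot be holomorphic at the cusp $0$ while having a pole at $i\infty$, since the Fricke involution interchanges the two cusps and hence forces the orders at $0$ and $i\infty$ to agree.  So the objects ``$\varphi_\pm$ holomorphic at $0$, in the $\pm$ Fricke eigenspace, with leading term $q^{-n_\pm}$'' that you propose do not exist, and the valence formula on $X_0(2)$ applied to Fricke eigenforms cannot produce the asymmetric cusp behavior you need.  Relatedly, taking $f_\pm = (f\pm\widehat f)/2$ from Theorem~\ref{Schwartz} does not help: that theorem builds $f_+$ and $f_-$ by two \emph{separate} mechanisms, not by splitting a single $f$, and each mechanism has its own source for the Fourier eigenvalue that is not a Fricke sign.

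The paper's actual argument is rather different from what you outline.  For $f_+$ one takes a depth-$2$ quasi-modular form $\phi$ of weight $4-\tfrac{d}{2}$ on $SL_2(\Z)$ (not weight $2-\tfrac{d}{2}$ on level $2$); the $+1$ Fourier eigenvalue comes from the periodicity of $\phi$ together with the depth-$2$ transformation law under $S$, as in Propositions~\ref{S1}--\ref{S2}.  For $f_-$ one uses an element of $(U^2-V^2)\,M_*(SL_2(\Z))\oplus W\,M_*(SL_2(\Z))\oplus \mathcal L\,M_*(SL_2(\Z))$, where $\mathcal L$ is the regularized logarithm of the $\lambda$-function; this extra $\mathcal L$-summand is essential for reaching the stated $n_-$ and is the minus-side analogue of the $E_2$-terms, a point your sketch omits.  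In both cases the specific values $n_+=\lfloor d/16+1/2\rfloor$ and $n_-=\lfloor d/16+1\rfloor$ are obtained not from a valence formula but from a linear-algebra count: after multiplying by $\Delta^n$ one lands in a direct sum of three $SL_2(\Z)$-spaces whose total dimension is $3n-\tfrac{d}{8}+2$ (resp.\ $3n-\tfrac{d}{8}+1$), and one needs this to exceed the number $n+1$ (resp.\ $n+2$) of vanishing conditions imposed on the Fourier expansion.  Solving these inequalities for the minimal $n$ gives exactly the stated floors.
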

\begin{rmk}
From Proposition \ref{SZ} it is clear that values of $f_{\pm}(\sqrt{2m})$ fr $0 \leq m < n_{\pm}$ are explicitly computable from the coefficients of the modular forms that are used to construct these Schwartz functions.  In each case that we have computed these values are non-zero, and based on this computational evidence it appears that this is true in general.
\end{rmk}
\begin{rmk}
For any given $d$, it is straightforward to verify the inequalities needed in order to use the result of Cohn and Elkies.
\end{rmk}
\begin{rmk}
For $d=8$ and $d=24$ these constructions give the same functions as in \cite{V} and \cite{ckmrv}.
\end{rmk}
\begin{rmk}
After the necessary inequalities are checked, Theorem \ref{8} implies that 
\begin{equation*}
\Delta_{d} \leq 2^{-0.4529d} \qquad d \to \infty
\end{equation*}
for $d \equiv 0 \pmod{8}$. Although this falls short of the Kabatiansky-Levenshtein bound, it answers the question of what bounds can be obtained by results fitting the theory of Viazovska et al. in a natural family. It would be extremely interesting to search for a modular family yielding improved bounds, or to explore what bounds can be obtained in more general energy optimization problems via modular forms. 
\end{rmk}
\begin{rmk}
Henry Cohn has pointed out to the authors that for $d=12$ this construction gives the same function as in \cite{CG} which proves the optimal upper bound $A_{+}(12) = \sqrt{2}$ for the uncertainty principle of Bourgain, Clozel, and Kahane.  He also notes that, after checking some inequalities, this construction would give an upper bound of $A_{+}(28) \leq 2$ for $d=28$.  Cohn and Gon\c{c}alves showed in \cite{CG} that $A_{+}(28) <1.99$, but they conjecture that there exists an optimal function for $d=28$ with non-zero roots at radii $\sqrt{2j + o(1)}$ for $j \geq 2$.  
\end{rmk}
The Schwartz functions constructed by Viazovska, Cohn, Kumar, Miller, Radchenko, and Viazovska, and Cohn and Gon\c{c}alves in \cite{V}, \cite{ckmrv}, \cite{ckmrv2}, and \cite{CG} showed that there is a surprising and beautiful connection between modular forms and various problems related to sphere packing.  It is natural to seek a better understanding of these functions, and to aim for a general framework. In this spirit, here we show that Schwartz functions can be constructed  a la Viazovska in a uniform way, yielding the first natural family of functions extending those of Viazovska et al. We hope that this will be useful either for related problems of energy minimization or other methods suitable for applying linear programming bounds, or to inspire more work in hopes of a general framework.

The paper is organized as follows.
In Section 2 we will give a brief background on the modular forms necessary for the constructions of the Schwartz functions.  In Section 3 we will give the proof of Theorem \ref{Schwartz} by splitting up the construction into a ``plus" and ``minus" side and then showing how to control the zeros of these functions.  In Section 4 we will prove Theorem \ref{8} by studying the dimensions of certain spaces of modular forms.

\section*{Acknowledgements}
The authors would like to thank Ken Ono for his thoughts on an earlier version of this work and Henry Cohn for his many helpful comments which improved this paper.

\section{Background on modular forms}
We will begin with a review of classical modular forms.  Denote the \textit{upper half-plane} by $\mathbb{H} := \{z = x + iy \in \C: y>0 \}$.  The \textit{modular group}, denoted by $SL_{2}(\Z)$, is the group of $2 \times 2$ integer matrices with determinant one.  It is generated by the two elements
\begin{equation*}
T:= \begin{pmatrix} 1 & 1 \\ 0 & 1 \end{pmatrix} \ \text{ and } \ S:= \begin{pmatrix} 0 & -1 \\ 1 & 0 \end{pmatrix}.
\end{equation*}
An element $\gamma = \begin{pmatrix} a & b \\ c & d \end{pmatrix} \in SL_{2}(\Z)$ acts on a point $z \in \mathbb{H}$ by the M\"{o}bius transformation
\begin{equation*}
\gamma z := \frac{az + b}{cz + d}.
\end{equation*}
We also define the level two congruence subgroup
\begin{align*}
\Gamma(2) & := \left\{ \gamma \in SL_{2}(\Z): \gamma \equiv I \pmod{2} \right\},
\end{align*}
where $I = \begin{pmatrix} 1 & 0 \\ 0 & 1 \end{pmatrix}$.  The action of a congruence subgroup on $\HH$ extends to an action on $\Q \cup \{i \infty \}$.  A \textit{cusp} of a congruence subgroup $\Gamma$ is an equivalence class of $\Q \cup \{i \infty \}$ under the action of $\Gamma$.  For each integer $k$ and each $\gamma = \begin{pmatrix} a & b \\ c & d \end{pmatrix} \in SL_{2}(\Z)$ define the \textit{slash operator} on smooth functions $f: \HH \to \C$ by
\begin{equation*}
f |_{k} \gamma (z) := (cz+d)^{-k} f(\gamma z).
\end{equation*}
The slash operator satisfies $f|_{k}(\gamma_{1} \gamma_{2}) = (f|_{k} \gamma_{1})|_{k} \gamma_{2}$.  Letting $k$ be an integer, we require the following definition.
\begin{Definition}
Suppose $f: \HH \to \C$ is holomorphic.  Then $f(z)$ is a \textit{holomorphic modular form of weight $k$} on a congruence subgroup $\Gamma$ if 
\begin{enumerate}
\item $f |_{k} \gamma (z) = f(z)$ for all $\gamma \in \Gamma$.
\item $f(z)$ has at most polynomial growth at all the cusps of $\Gamma$.
\end{enumerate}
\end{Definition}  
Denote the space of weight $k$ holomorphic modular forms on $\Gamma$ by $M_{k}(\Gamma)$.  We can relax the definition of modular form to allow poles at the cusps.  Denote this space by $M_{k}^{!}(\Gamma)$.  Define the \textit{Eisenstein series} by
\begin{equation}
E_{k}(z) := \frac{1}{2 \zeta(k)} \sum_{(0,0) \neq (n,m) \in \Z^2} \frac{1}{(nz+m)^{k}} \in M_{k}(SL_{2}(\Z)),
\end{equation}
where $\zeta(s)$ is the Riemann zeta-function.  For $k>2$, $E_{k}(z)$ is absolutely convergent, and one can easily check the action of $S$ and $T$ to see it is modular on $SL_{2}(\Z)$.  For even $k>2$, its Fourier expansion with $q = e^{2 \pi i z}$ is given by
\begin{equation*}
E_{k}(z) = 1- \frac{2k}{B_{k}} \sum_{n \geq 1} \sigma_{k-1}(n) q^n,
\end{equation*}
where $B_{k}$ is the $k$th Bernoulli number and $\sigma_{k-1}(n)$ is the sum of divisors function given by
\begin{equation*}
\sigma_{k-1}(n) = \sum_{d \mid n} d^{k-1}.
\end{equation*}
For $k=2$, the Eisenstein series is no longer absolutely convergent.  One can still define the weight $2$ Eisenstein series by its Fourier expansion:
\begin{equation*}
E_{2}(z) = 1 -24 \sum_{n \geq 1} \sigma_{1}(n) q^n.
\end{equation*}
$E_{2}(z)$ is still periodic by definition, but has a slightly more complicated transformation under $S$, given by
\begin{equation*}
z^{-2} E_{2} \left(-\frac{1}{z} \right) = E_{2}(z) + \frac{6}{\pi i z}.
\end{equation*}    
The weight $2$ Eisenstein series is the first example of a \textit{quasi-modular form}.  We say that $f$ is a depth $d$ quasi-modular form if it is a degree $d$ polynomial in $E_{2}$ with modular form coefficients.  Another important modular form on $SL_{2}(\Z)$ is the weight $12$ Delta function
\begin{equation*}
\Delta(z) = \frac{E_{4}(z)^{3} - E_{6}(z)^2}{1728} = q\prod_{n=1}^{\infty}(1-q^n)^{24}.
\end{equation*}
The product formula shows that $\Delta(z)$ does not vanish on $\HH$.  Classical theory of modular forms implies that we have the following structure for algebras of modular forms, as graded rings:
\begin{align*}
M(SL_{2}(\Z)) &= \bigoplus_{k \in \Z} M_{k}(SL_{2}(\Z)) = \C [E_{4}, E_{6}], \\
M^{!}(\Gamma) &= \bigoplus_{k \in \Z} M_{k}^{!}(\Gamma) = M(\Gamma)[\Delta^{-1}].
\end{align*}

We also need the following classical \textit{Jacobi theta functions}
\begin{align*}
\theta_{2}(z) &= \sum_{n \in \Z} e^{\pi i \left( n + \frac{1}{2} \right)^{2} z}, \\
\theta_{3}(z) &= \sum_{n \in \Z} e^{\pi i n^2 z}, \\
\theta_{4}(z) &= \sum_{n \in \Z} (-1)^n e^{\pi i n^2 z},
\end{align*}
which are modular forms of weight $\frac12$ (for simplicity, we omit the exact definition of modular forms of half-integral weight).
Following the notation in \cite{ckmrv2} we consider the following special modular forms of weight $2$:
\begin{align} 
\begin{split}
U(z) &= \theta_{3}(z)^4 \\
V(z) &= \theta_{2}(z)^4 \\
W(z) &= \theta_{4}(z)^4.
\end{split}
\end{align}
With this notation we can write the Jacobi identity as $U = V + W$ and we have the fact
\begin{equation}
M(\Gamma(2)) = \C[V, W].
\end{equation}
The modular forms $U, V$, and $W$ transform under $SL_{2}(\Z)$ as follows:
\begin{alignat}{3}
U|_{2}T &= W, & \quad V|_{2} T &= -V, & \quad W|_{2} T &= U, \\
U|_{2} S &= -U, & \quad V|_{2} S &= -W, & \quad W|_{2} S &=-V.
\end{alignat}
We will also require the modular function
\begin{equation}
\lambda(z) := \frac{V}{U}(z) \in M_{0}^{!}(\Gamma(2)).
\end{equation}
The function $\lambda(s)$ is the {\it Hauptmodul} for $\Gamma(2)$, which means that it generates the function field for the modular curve (explicitly, $M_0^!(\Gamma(2))=\mathbb C(\lambda)$).  It takes the values $0, 1$, and $\infty$ at the cusps $i \infty, 0$, and $-1$ of $\Gamma(2)$ respectively, and it decreases from $1$ to $0$ as $z$ goes from $0$ to $i \infty$ along the imaginary axis.  The function $\lambda(z)$ satisfies the transformation properties
\begin{align}
\begin{split}
(\lambda|_{0} S) (z) &= 1- \lambda(z) \\
(\lambda|_{0}T) (z) &= -\frac{\lambda(z)}{1-\lambda(z)}.
\end{split}
\end{align}
If we define $\lambda_{S}(z) := (\lambda |_{0}S) (z)$, then we also have
\begin{equation}
(\lambda_{S} |_{0}T) (z) = \frac{1}{\lambda_{S}(z)}.
\end{equation}
We again follow \cite{ckmrv2} to define logarithms of $\lambda$ and $\lambda_{S}$.  Because $\lambda$ and $\lambda_{S}$ do not vanish on $\HH$ we can define
\begin{equation}
\mathcal{L}(z) := \int_{0}^{z} \frac{\lambda'(w)}{\lambda(w)} dw \quad \text{and} \quad \mathcal{L}_{s}(z) := - \int_{z}^{\i \infty} \frac{\lambda_{S}'(w)}{\lambda_{S}(w)} dw,
\end{equation}
where the contours are chosen to approach $0$ or $i \infty$ along vertical lines.  These functions are essentially the regularized Eichler integrals of the weight $2$ weakly holomorphic modular form $\frac{\lambda'(z)}{\lambda(z)}$ at the cusps $0$ and $i \infty$.  They therefore are the holomorphic parts of some weight $0$ harmonic Maass form and will play the same role for constructing Schwartz functions on the ``minus" side as $E_{2}$ plays on the ``plus" side.  For more information on these topics see \cite{BFOR}.  These functions satisfy
\begin{equation*}
\mathcal{L}(it) = \log(\lambda(it)) \quad \text{and} \quad \mathcal{L}_{S}(it) = \log(\lambda_{S}(it)) = \log(1 - \lambda(it))
\end{equation*}
for $t>0$, and so are holomorphic functions for which $e^{\mathcal{L}} = \lambda$ and $e^{\mathcal{L}_{S}}= \lambda_{S}$, but are not in general the principal branches of the logarithms of $\lambda$ and $\lambda_{S}$.  We have the following asymptotics as $q \to 0$:
\begin{align}
\begin{split}
\mathcal{L}(z) &= \pi i z + 4 \log(2) -8 q^{\frac{1}{2}} + O(q) \\
\mathcal{L}_{S}(z) &= -16q^{\frac{1}{2}} - \frac{64}{3} q^{\frac{3}{2}} + O(q^{\frac{5}{2}}).
\end{split}
\end{align}
The functions $\mathcal{L}$ and $\mathcal{L}_{S}$ satisfy the transformation properties
\begin{alignat}{2} \label{L}
\mathcal{L}|_{0} T^{\pm 1} &= \mathcal{L} - \mathcal{L}_{S} \pm \pi i, \quad & \mathcal{L}_{S}|_{0} T &= -\mathcal{L}_{S}, \\
\mathcal{L}|_{0} S &= \mathcal{L}_{S}, \quad & \mathcal{L}_{S}|_{0}S &= \mathcal{L},
\end{alignat}
where $f|_{k}T^{-1} = f(z-1)$.

\section{Proof of Theorem \ref{Schwartz}}
\subsection{The $+1$ eigenfunction construction}

In this section we discuss generalizations of Viazovska's $+1$ eigenfunction construction.  Let
\begin{equation*}
\phi(z) = \sum c_{\phi}(n) q^{n},
\end{equation*}
be a $1$-periodic function on the upper half-plane.  The following proposition presents our function of interest in a form where its Fourier transform is easily calculable.
\begin{Prop} \label{S1}
Let $\phi(z)$ be a $1$-periodic function that vanishes as $z \to i \infty$ and suppose there is an $r_{0} \geq 0$ such that
\begin{align*}
\phi \left(\frac{i}{t} \right) &= O \left(t^{-\frac{d}{2} +2} e^{r_{0}^2 \pi t} \right) \qquad t \to \infty.
\end{align*}
Then for $ x \in \R^{d}$ 
\begin{align*}
a(x) &:= \int_{-1}^{i} \phi \left(-\frac{1}{z+1} \right) (z+1)^{\frac{d}{2} -2} e^{\pi i |x|^{2} z} dz +  \int_{1}^{i} \phi \left(-\frac{1}{z-1} \right) (z-1)^{\frac{d}{2} -2} e^{\pi i |x|^{2} z} dz \\
& -2\int_{0}^{i} \phi \left(-\frac{1}{z} \right) z^{\frac{d}{2} -2} e^{\pi i |x|^{2} z} dz +2\int_{i}^{i \infty} \phi \left(z \right) e^{\pi i |x|^{2} z} dz
\end{align*}
is a radial Schwartz function and $\widehat{a}(x) = (-i)^{-\frac{d}{2}} a(x)$.
\end{Prop}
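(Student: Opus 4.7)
The plan is to write $a = I_1 + I_2 - 2 I_3 + 2 I_4$ for the four integrals in the statement (in the order they appear) and to prove $\widehat{I_j} = \pm (-i)^{-d/2} I_k$ for the appropriate pairings, so that $\widehat a = (-i)^{-d/2} a$ by summation. The three ingredients are the Gaussian identity
\begin{equation*}
\mathcal{F}\bigl(e^{\pi i |x|^{2} z}\bigr)(y) = (-iz)^{-d/2} e^{\pi i |y|^{2} (-1/z)} \qquad (z \in \mathbb{H}),
\end{equation*}
the involution $z \mapsto w = -1/z$ of $\mathbb{H}$, and the $1$-periodicity of $\phi$.

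First I would check that $a$ is a well-defined radial Schwartz function. Radiality is built into the ansatz since the integrand depends on $x$ only through $|x|^{2}$. For absolute convergence, at each contour endpoint on the real axis one of the arguments $-1/(z \pm 1)$, $-1/z$, or $z$ of $\phi$ tends to $i\infty$, and the vanishing of $\phi$ at $i\infty$ gives exponential decay of the integrand; at the endpoint $z = i$ the integrand is smooth. The factors $(z \pm 1)^{d/2-2}$ and $z^{d/2-2}$ are single-valued because $d \equiv 0 \pmod 4$ makes $d/2-2$ an even integer, and the growth hypothesis on $\phi(i/t)$ is precisely designed to keep the integrand uniformly integrable in $x$. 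Decay of $a$ and its derivatives in $|x|$ comes from the factor $e^{-\pi |x|^{2} \Im(z)}$, which gives Gaussian decay on the interior of each contour; near the real-axis endpoints the super-exponential vanishing of $\phi$ compensates via a saddle-point estimate and produces exponential decay in $|x|$, which is more than enough for the Schwartz property after differentiating under the integral.

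The heart of the argument is computing each $\widehat{I_j}$ by Fubini and the substitution $w = -1/z$. For $I_3$ the integrand simplifies via $z^{d/2-2}(-iz)^{-d/2} = (-i)^{-d/2} z^{-2}$, and with $dz/z^{2} = -dw$ the substitution gives $\widehat{I_3} = -(-i)^{-d/2} I_4$; the same substitution in $I_4$ gives $\widehat{I_4} = -(-i)^{-d/2} I_3$. For $I_1$ the substitution sends the contour from $-1$ to $i$ to a curve in $\overline{\mathbb{H}}$ joining $1$ and $i$, which by Cauchy's theorem can be deformed to the contour of $I_2$ since the integrand is holomorphic in $\mathbb{H}$. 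The crucial identity is that under $w = -1/z$,
\begin{equation*}
-\frac{1}{z+1} \longmapsto \frac{w}{1-w} = -1 - \frac{1}{w-1},
\end{equation*}
so by $1$-periodicity $\phi(w/(1-w)) = \phi(-1/(w-1))$, which is exactly the argument of $\phi$ in $I_2$. The remaining algebra,
\begin{equation*}
(z+1)^{d/2-2} (-iz)^{-d/2}\, dz = (-i)^{-d/2}\, (w-1)^{d/2-2}\, dw,
\end{equation*}
then yields $\widehat{I_1} = (-i)^{-d/2} I_2$, and symmetrically $\widehat{I_2} = (-i)^{-d/2} I_1$. Summing, $\widehat a = (-i)^{-d/2} a$.

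The main obstacle is the bookkeeping in the change of variables and the contour deformation for $\widehat{I_1}$ and $\widehat{I_2}$: one must verify that the image of the contour under $z \mapsto -1/z$ lies in $\overline{\mathbb{H}}$ so that the integrand is holomorphic on the region swept out during the deformation, and one must carefully combine the algebraic factors. Under $d \equiv 0 \pmod 4$ all the powers are integer and single-valued, which eliminates branch ambiguity and reduces the identities to elementary manipulations; a secondary subtlety is the Fubini step, for which the growth hypothesis on $\phi$ provides the needed uniform integrable majorant.
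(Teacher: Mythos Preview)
Your proof is correct and follows the same approach as the paper: take the Fourier transform termwise via the Gaussian identity, substitute $w=-1/z$, and use the $1$-periodicity of $\phi$ (in the form $\phi\bigl(-1-\tfrac{1}{w-1}\bigr)=\phi\bigl(-\tfrac{1}{w-1}\bigr)$) to match the four integrals pairwise, with your contour-deformation remark making explicit what the paper leaves implicit. One minor point: the growth hypothesis on $\phi(i/t)$ plays no role in this proposition---only the vanishing of $\phi$ at $i\infty$ is needed for convergence and the Schwartz estimate---and the paper's proof likewise uses only the latter here, reserving the growth bound for the next proposition.
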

\begin{proof}
By hypothesis,  $\phi(z)$ decays exponentially as ${\rm{Im}}(z) \to \infty$, all of the above terms will be bounded and $a$ and all of its derivatives will decay exponentially so $a$ is Schwartz.
Because the integrals are absolutely and uniformly convergent we can switch the order of the integrals to compute:
\begin{align*}
\widehat{a}(x) &=  \int_{-1}^{i} \phi \left(-\frac{1}{z+1} \right) (z+1)^{\frac{d}{2} -2} (-iz)^{-\frac{d}{2}} e^{\pi i |x|^{2} \left(-\frac{1}{z} \right)} dz \\
&+ \int_{1}^{i} \phi \left(-\frac{1}{z-1} \right) (z-1)^{\frac{d}{2} -2} (-iz)^{-\frac{d}{2}} e^{\pi i |x|^{2} \left(-\frac{1}{z} \right)} dz \\
& -2\int_{0}^{i} \phi \left(-\frac{1}{z} \right) z^{\frac{d}{2} -2} (-iz)^{-\frac{d}{2}} e^{\pi i |x|^{2} \left(-\frac{1}{z} \right)} dz +2\int_{i}^{i \infty} \phi \left(z \right) (-iz)^{-\frac{d}{2}} e^{\pi i |x|^{2} \left(-\frac{1}{z} \right)} dz.
\end{align*}
Letting $w = -\frac{1}{z}$, we find:
\begin{align*}
\widehat{a}(x) &= (-i)^{-\frac{d}{2}} \int_{1}^{i} \phi \left(1-\frac{1}{w-1} \right) \left(1- \frac{1}{w}\right)^{\frac{d}{2} -2} w^{\frac{d}{2} -2} e^{\pi i |x|^{2} w} dw \\
&+ (-i)^{-\frac{d}{2}} \int_{-1}^{i} \phi \left(1-\frac{1}{w+1} \right) \left(-1- \frac{1}{w}\right)^{\frac{d}{2} -2} w^{\frac{d}{2} -2} e^{\pi i |x|^{2} w} dw \\
& -2(-i)^{-\frac{d}{2}} \int_{i \infty}^{i} \phi \left(w \right) e^{\pi i |x|^{2} w} dw +2(-i)^{-\frac{d}{2}} \int_{i}^{0} \phi \left(-\frac{1}{w} \right) w^{\frac{d}{2} -2} e^{\pi i |x|^{2} w} dw \\
&= (-i)^{-\frac{d}{2}} \int_{1}^{i} \phi \left(-\frac{1}{w-1} \right) \left(w- 1 \right)^{\frac{d}{2} -2} e^{\pi i |x|^{2} w} dw \\
&+(-i)^{-\frac{d}{2}} \int_{-1}^{i} \phi \left(-\frac{1}{w+1} \right) \left(-w- 1 \right)^{\frac{d}{2} -2} e^{\pi i |x|^{2} w} dw \\
& +2(-i)^{-\frac{d}{2}} \int_{i}^{i \infty} \phi \left(w \right) e^{\pi i |x|^{2} w} dw -2(-i)^{-\frac{d}{2}} \int_{0}^{i} \phi \left(-\frac{1}{w} \right) w^{\frac{d}{2} -2} e^{\pi i |x|^{2} w} dw \\
&= (-i)^{-\frac{d}{2}}a(x).
\end{align*}
Note that the only property we used above is that $\phi(z)$ is $1$-periodic.
\end{proof}

In her work, Viazovska used special choices of functions $\phi$ to show that the resulting $a(x)$ has the additional property that it has double zeros at vectors of length $\sqrt{2k}$, for $k>1$ and $k>2$, and a single zero at vectors of length $\sqrt{2}$ and $2$ in dimensions $8$ and $24$ respectively.  The significance of this is that the former numbers are the non-minimal length vectors in the $E_{8}$ and Leech lattice respectively.  Her idea was to relate $a(r)$ satisfying the hypothesis in the proposition above to a function with these specific zeros.  The asymptotic behavior of the $\phi$ combined with the simple characterization zeros of the $\sin^{2}$ factor in the next proposition offers this description.   
\begin{Prop} \label{S2}
Suppose that $\phi(z)$ is a weakly holomorphic quasi-modular form of weight $k=-\frac{d}{2} +4$ and depth $2$ on $SL_{2}(\Z)$ satisfying the conditions of Proposition \ref{S1}. Then if $r \geq r_{0}$ we have that
\begin{equation*}
a(r) = -4\sin^{2} \left( \frac{\pi r^2}{2} \right) \int_{0}^{i \infty} \phi \left( - \frac{1}{z} \right) z^{\frac{d}{2} -2} e^{\pi i r^2 z} dz.
\end{equation*}

\end{Prop}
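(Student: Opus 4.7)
My plan is to expand the right-hand side using $-4\sin^2(\pi r^2/2) = e^{\pi i r^2} - 2 + e^{-\pi i r^2}$ and manipulate the three resulting pieces until they match the four-term definition of $a(r)$. Setting $I := \int_0^{i\infty} \phi(-1/z)\, z^{d/2-2}\, e^{\pi i r^2 z}\, dz$, I would absorb the phase factors $e^{\pm \pi i r^2}$ into the exponential via the substitutions $z \mapsto z \mp 1$, so that $e^{\pm \pi i r^2} I$ becomes an integral along the vertical ray $\pm 1 + i\R_{\geq 0}$ with integrand $\phi(-1/(w \mp 1))\,(w \mp 1)^{d/2 - 2}\, e^{\pi i r^2 w}$. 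Since $\phi$ is weakly holomorphic on $\HH$ and $e^{\pi i r^2 w}$ decays exponentially in $\mathrm{Im}(w)$ for $r > r_0$ (using the growth hypothesis on $\phi(i/t)$), I can deform each of the three resulting contours through the point $i$, splitting them as $[\pm 1, i] \cup [i, \pm 1 + i\infty]$ and $[0, i] \cup [i, i\infty]$.

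The three finite pieces $[\pm 1, i]$ and $[0, i]$ reproduce precisely the first three integrals in the definition of $a(r)$. To match the remaining fourth integral $2\int_i^{i\infty}\phi(z) e^{\pi i r^2 z}\,dz$, I would further deform each of the three tails $[i, \pm 1 + i\infty]$ and $[i, i\infty]$ onto the common ray $[i, i\infty]$, using holomorphicity together with the vanishing of the horizontal connecting segment at height $T \to \infty$. This reduces the whole proposition to the single pointwise identity
\begin{equation*}
2\phi(u) \;=\; (u+1)^{d/2-2}\,\phi\!\left(-\tfrac{1}{u+1}\right) + (u-1)^{d/2-2}\,\phi\!\left(-\tfrac{1}{u-1}\right) - 2\,u^{d/2-2}\,\phi\!\left(-\tfrac{1}{u}\right)
\end{equation*}
for all $u \in \HH$.

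The final step is to verify this identity via the quasi-modular transformation of $\phi$. With $k = -d/2 + 4$, it is equivalent to $2\phi = (u+1)^2 (\phi|_k ST) + (u-1)^2 (\phi|_k ST^{-1}) - 2u^2 (\phi|_k S)$. Writing $\phi = f_0 + f_1 E_2 + f_2 E_2^2$ with $f_j \in M_{k-2j}(\SL_2(\Z))$ and applying $E_2|_2 S(z) = E_2(z) + \tfrac{6}{\pi i z}$ along with the modularity of the $f_j$ gives $(\phi|_k S)(z) = \phi(z) + A(z)/z + B(z)/z^2$ for explicit $1$-periodic holomorphic $A, B$ built from the $f_j$ and $E_2$. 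Because $T$ is translation and $\phi$, $A$, $B$ are all $1$-periodic, summing the three slashed terms weighted by $(u+1)^2$, $(u-1)^2$, and $-2u^2$ collects coefficients $(u+1)^2 + (u-1)^2 - 2u^2 = 2$ on $\phi$, $(u+1) + (u-1) - 2u = 0$ on $A$, and $1 + 1 - 2 = 0$ on $B$, producing exactly $2\phi$. The heart of the proof is this perfect match between depth $2$ and the quadratic kernel $-4\sin^2(\pi r^2/2)$: a depth-$3$ correction term would leave a surviving $(u+1)^3 + (u-1)^3 - 2u^3 = 6u$, which is precisely why the proposition requires depth exactly $2$.
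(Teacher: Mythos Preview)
Your proof is correct and follows the same route as the paper: expand $-4\sin^2(\pi r^2/2)$, shift the three copies of the integral by $z\mapsto z\mp 1$, deform all contours through $i$, and identify the combined tail on $[i,i\infty]$ with $2\int_i^{i\infty}\phi(z)e^{\pi i r^2 z}\,dz$ via the depth-$2$ quasi-modular transformation law. The paper's proof simply invokes ``the transformation properties of a depth $2$ quasi-modular form'' at that last step, whereas you spell out and verify the identity $(u+1)^2(\phi|_kST)+(u-1)^2(\phi|_kST^{-1})-2u^2(\phi|_kS)=2\phi$ explicitly, including the nice observation that the coefficients $(u\pm1)^j+(u\mp1)^j-2u^j$ vanish for $j=0,1$ and equal $2$ for $j=2$; this also makes transparent why depth exactly $2$ is required.
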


\begin{proof}
By direct calculation we have that
\begin{align*}
 &-4\sin^{2} \left( \frac{\pi r^2}{2} \right) \int_{0}^{i \infty} \phi \left( - \frac{1}{z} \right) z^{\frac{d}{2} -2} e^{\pi i r^2 z} dz \\
 &= \int_{0}^{i \infty} \phi \left( - \frac{1}{z} \right) z^{\frac{d}{2} -2} e^{\pi i r^2 (z+1)} dz - 2\int_{0}^{i \infty} \phi \left( - \frac{1}{z} \right) z^{\frac{d}{2} -2} e^{\pi i r^2 z} dz \\
 &+ \int_{0}^{i \infty} \phi \left( - \frac{1}{z} \right) z^{\frac{d}{2} -2} e^{\pi i r^2 (z-1)} dz \\
&= \int_{1}^{i \infty +1} \phi \left( - \frac{1}{z-1} \right) (z-1)^{\frac{d}{2} -2} e^{\pi i r^2 z} dz - 2\int_{0}^{i \infty} \phi \left( - \frac{1}{z} \right) z^{\frac{d}{2} -2} e^{\pi i r^2 z} dz \\
&+\int_{-1}^{i \infty -1} \phi \left( - \frac{1}{z+1} \right) (z+1)^{\frac{d}{2} -2} e^{\pi i r^2 z} dz.
\end{align*}
We can deform the path of integration because the integrand decays as ${\rm{Im}}(z) \to \infty$ to see:
\begin{align*}
&\int_{1}^{i} \phi \left( - \frac{1}{z-1} \right) (z-1)^{\frac{d}{2} -2} e^{\pi i r^2 z} dz - 2\int_{0}^{i} \phi \left( - \frac{1}{z} \right) z^{\frac{d}{2} -2} e^{\pi i r^2 z} dz \\
&+\int_{-1}^{i} \phi \left( - \frac{1}{z+1} \right) (z+1)^{\frac{d}{2} -2} e^{\pi i r^2 z} dz \\
&+ \int_{i}^{i \infty} \left[ \phi \left( - \frac{1}{z-1} \right) (z-1)^{\frac{d}{2} -2} + \phi \left( - \frac{1}{z+1} \right) (z+1)^{\frac{d}{2} -2} -2\phi \left( - \frac{1}{z} \right) z^{\frac{d}{2} -2} \right] e^{\pi i r^{2} z} dz.
\end{align*}
By using the transformation properties of a depth $2$ quasi-modular form we find that this last expression is $a(r)$.

\end{proof}

\subsection{The $-1$ eigenfunction construction}

In the previous section we discussed the method Viazovska used to construct Schwartz functions that were eigenfunctions of the Fourier transform with eigenvalue $+1$.  Viazovska also used theta functions to construct Schwartz functions with eigenvalue $-1$ under the Fourier transform.  Here we generalize this by studying weakkly holomorphic modular forms on $\Gamma(2)$.  For a modular form $\psi(z) \in M_{k}^{!}(\Gamma(2))$, let $\psi_{\gamma}(z):= \psi_{I}(z) |_{k} \gamma$.

\begin{Prop} \label{S3}
Let $\psi_{I}(z)$ be a weight $-\frac{d}{2} +2$ weakly holomorphic modular form on $\Gamma(2)$ that vanishes as $z \to 0$ and suppose that there is an $r_{0} \geq 0$ such that 
\begin{align*}
\psi_{I}(it) &= O(e^{r_{0}^{2} \pi t}) \qquad t \to \infty \\
\psi_{I}(z) &= \psi_{T}(z) + \psi_{S}(z).
\end{align*}
Then for $x \in \R^{d}$,
\begin{align*}
b(x)&:= \int_{-1}^{i} \psi_{T}(z) e^{\pi i |x|^{2}z} dz + \int_{1}^{i} \psi_{T}(z) e^{\pi i |x|^{2}z} dz \\
&-2\int_{0}^{i} \psi_{I}(z) e^{\pi i |x|^{2}z} dz - 2 \int_{i}^{i \infty} \psi_{S}(z) e^{\pi i |x|^{2}z} dz.
\end{align*}
is a radial Schwartz function and $\widehat{b}(x) =-(-i)^{-\frac{d}{2}} b(x)$.
\end{Prop}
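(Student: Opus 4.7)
The plan is to mirror the proof of Proposition \ref{S1}, replacing the bare $1$-periodicity of $\phi$ used there with the full $\Gamma(2)$-modularity of $\psi_I$ (as encoded by its three slashes $\psi_I$, $\psi_T$, $\psi_S$ and the hypothesis $\psi_I=\psi_T+\psi_S$). The Schwartz property of $b$ follows from exactly the same estimates as for $a$: the vanishing of $\psi_I$ as $z\to 0$ and the growth bound $\psi_I(it)=O(e^{r_0^2\pi t})$ make each of the four integrands decay like $e^{-\pi\delta(z)|x|^2}$ uniformly on its contour, so $b(x)$ and all its derivatives decay exponentially in $|x|$.

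The core of the argument is the Fourier transform calculation. Using $\mathcal{F}(e^{\pi i|y|^2 z})(x) = (-iz)^{-d/2}e^{-\pi i|x|^2/z}$ together with absolute convergence to exchange the order of integration, $\widehat b(x)$ is given by the same four contours as $b(x)$ with $e^{\pi i|x|^2 z}$ replaced by $(-iz)^{-d/2}e^{\pi i|x|^2(-1/z)}$. I would then substitute $w=-1/z$ in each integral: the contour $[-1,i]$ maps to $[1,i]$, $[1,i]$ to $[-1,i]$, $[0,i]$ to $[i\infty,i]$, and $[i,i\infty]$ to $[i,0]$. The Jacobian and branch factor combine cleanly as $(-iz)^{-d/2}\,dz = i^{-d/2}w^{d/2-2}\,dw$ (well-defined as a meromorphic function since $d/2$ is even).

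The next step is to convert $\psi_T(-1/w)$, $\psi_I(-1/w)$, and $\psi_S(-1/w)$ back to $w$ using modularity. Directly from $\psi_S = \psi_I|_k S$ one gets $\psi_I(-1/w)=w^k\psi_S(w)$, and because $S^2=-I$ acts trivially in the slash action of even weight $k=-d/2+2$ one also has $\psi_S(-1/w)=w^k\psi_I(w)$. Feeding these into the hypothesis $\psi_I=\psi_T+\psi_S$ evaluated at $-1/w$ yields the crucial identity
\begin{equation*}
\psi_T(-1/w) \;=\; \psi_I(-1/w)-\psi_S(-1/w) \;=\; w^k\bigl(\psi_S(w)-\psi_I(w)\bigr) \;=\; -w^k\psi_T(w).
\end{equation*}
Inserting the three relations into the four transformed integrals, the $w$-powers cancel exactly (since $k+d/2-2=0$), each integral acquires an overall factor of $-i^{-d/2}$, and after accounting for the orientation reversals on the third and fourth contours the four pieces reassemble into $b(x)$. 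Finally, $d\equiv 0\pmod 4$ forces $(-1)^{d/2}=1$, hence $i^{-d/2}=(-i)^{-d/2}$, giving $\widehat b(x)=-(-i)^{-d/2}b(x)$.

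The main bookkeeping obstacle is keeping track of the contour orientations after $w=-1/z$ and correctly collecting the two powers of $w^k$ that appear when inverting the $S$-action on $\psi_S$ and $\psi_T$; the essential new input beyond the proof of Proposition \ref{S1} is the identity $\psi_T(-1/w)=-w^k\psi_T(w)$, which is exactly where the hypothesis $\psi_I=\psi_T+\psi_S$ (and not merely $1$-periodicity) is used. Once that identity is in place the remainder is a direct term-by-term comparison parallel to the $+1$ eigenfunction case.
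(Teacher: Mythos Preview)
Your proposal is correct and follows essentially the same route as the paper: compute $\widehat b$ by exchanging the order of integration, substitute $w=-1/z$, and use the transformation identities $\psi_I(-1/w)=w^k\psi_S(w)$, $\psi_S(-1/w)=w^k\psi_I(w)$, and $\psi_T(-1/w)=-w^k\psi_T(w)$ to reassemble $b$. The paper simply states these identities, whereas you explicitly derive the last one from the hypothesis $\psi_I=\psi_T+\psi_S$; that derivation is exactly the intended source of the identity, so your argument is the paper's proof with the key step made explicit.
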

\begin{proof}
The fact that $b(x)$ is a Schwartz functions follows the same way as before.  The Fourier transform of $b(x)$ is given as
\begin{align*}
\widehat{b}(x) &= \int_{-1}^{i} \psi_{T}(z) (-iz)^{-\frac{d}{2}} e^{\pi i |x|^{2}\left(-\frac{1}{z} \right)} dz + \int_{1}^{i} \psi_{T}(z) (-iz)^{-\frac{d}{2}}  e^{\pi i |x|^{2}\left(-\frac{1}{z} \right)} dz \\
&-2\int_{0}^{i} \psi_{I}(z) (-iz)^{-\frac{d}{2}}  e^{\pi i |x|^{2}\left(-\frac{1}{z} \right)} dz - 2 \int_{i}^{i \infty} \psi_{S}(z) (-iz)^{-\frac{d}{2}}  e^{\pi i |x|^{2}\left(-\frac{1}{z} \right)} dz.
\end{align*}
We substitute $w = -\frac{1}{z}$ as before and use the facts
\begin{align*}
\psi_{T} \left(-\frac{1}{z} \right) &= - \psi_{T}(z) z^{-\frac{d}{2} +2}, \\
\psi_{I} \left(-\frac{1}{z} \right) &= \psi_{S}(z) z^{-\frac{d}{2} +2}
\end{align*}
to show that $\widehat{b}(x) = -(-i)^{-\frac{d}{2}}b(x)$.
\end{proof}
Following the same ideas as for $a(x)$, we have
\begin{Prop} \label{S4}
Suppose that $\psi_{I}(z)$ is a weakly holomorphic modular form of weight $-\frac{d}{2} +2$ on $\Gamma(2)$ satisfying the conditions of Proposition \ref{S3}.  Then if $r \geq r_{0}$ we have that
\begin{equation*}
b(r) = -4 \sin^{2} \left(\frac{\pi r^2}{2} \right) \int_{0}^{i \infty} \psi_{I}(z) e^{\pi i r^2 z} dz.
\end{equation*}
\end{Prop}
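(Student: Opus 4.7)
The plan is to mimic the proof of Proposition \ref{S2}, starting from the right-hand side and transforming it into $b(r)$. First, expand the trigonometric factor via $-4\sin^{2}(\pi r^{2}/2) = e^{i\pi r^{2}} - 2 + e^{-i\pi r^{2}}$ and distribute it against $\int_{0}^{i\infty}\psi_{I}(z)e^{\pi i r^{2}z}\,dz$. Writing $e^{\pm i\pi r^{2}} e^{\pi i r^{2}z} = e^{\pi i r^{2}(z\pm 1)}$ and substituting $z \mapsto z \mp 1$ in the two shifted pieces, the right-hand side becomes
\[
\int_{1}^{1+i\infty}\psi_{I}(z-1)e^{\pi i r^{2}z}\,dz + \int_{-1}^{-1+i\infty}\psi_{I}(z+1)e^{\pi i r^{2}z}\,dz - 2\int_{0}^{i\infty}\psi_{I}(z)e^{\pi i r^{2}z}\,dz.
\]

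Next, since $T^{2} = \left(\begin{smallmatrix}1 & 2 \\ 0 & 1\end{smallmatrix}\right) \in \Gamma(2)$, the form $\psi_{I}$ is $2$-periodic, giving $\psi_{I}(z-1) = \psi_{I}(z+1) = \psi_{T}(z) = \psi_{T^{-1}}(z)$, so both shifted integrands collapse to $\psi_{T}(z)$. Now deform each of the three contours $c \to c + i\infty$ (for $c \in \{-1,0,1\}$) into the broken path $c \to i \to i\infty$; the horizontal connecting segments at large height vanish in the limit by the growth hypothesis $\psi_{I}(it) = O(e^{r_{0}^{2}\pi t})$ combined with the assumption $r \geq r_{0}$, which forces the integrand to decay exponentially there (the same bound applying to the $\psi_{T}$ translates by $2$-periodicity).

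Collecting the three contributions along $[i,i\infty]$ yields a single tail integral with integrand $2\psi_{T} - 2\psi_{I}$. Applying the crucial hypothesis $\psi_{I} = \psi_{T} + \psi_{S}$ from Proposition \ref{S3}, this simplifies to $-2\psi_{S}$. The remaining bounded-contour pieces over $[-1,i]$, $[1,i]$, and $[0,i]$ reproduce the first three terms in the definition of $b(r)$, completing the identification.

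The main obstacle will be the careful justification of the contour deformations at infinity; this is precisely where the growth hypothesis and the condition $r \geq r_{0}$ intervene, exactly as in the proof of Proposition \ref{S2}. The decomposition hypothesis $\psi_{I} = \psi_{T} + \psi_{S}$ plays the role here that the depth-$2$ quasi-modular transformation law played in the earlier proof, and it is precisely this relation that recovers the $\psi_{S}$ tail appearing in the definition of $b(r)$.
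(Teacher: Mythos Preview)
Your proposal is correct and follows essentially the same route as the paper's proof: the paper explicitly points to the two identities $\psi_{I}(z-1)=\psi_{I}(z+1)=\psi_{T}(z)$ and $\psi_{T}(z)-\psi_{I}(z)=-\psi_{S}(z)$ as the replacements for the quasi-modular transformation law in Proposition~\ref{S2}, and these are exactly the facts you invoke after expanding the $\sin^{2}$ factor, shifting, and deforming the contours through $i$.
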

\begin{proof}
The proof follows almost the same as the proof for Proposition \ref{S2}.  The main points we use to show this are that
\begin{equation*}
\psi_{I}(z-1) = \psi_{I}(z+1) = \psi_{T}(z)
\end{equation*}
and 
\begin{equation*}
\psi_{T}(z) - \psi_{I}(z) = -\psi_{S}(z).
\end{equation*}
\end{proof}
The following propositions generalize Proposition \ref{S3} and Proposition \ref{S4} to allow us to use $\mathcal{L}(z)$.  As we will explain in Section 4, this construction was not needed to resolve the sphere packing problem in dimensions $8$ and $24$, but allows better control over $n_{-}$ in general in Theorem \ref{8}.
\begin{Prop} \label{SL1}
Let $g(z) = f(z) \mathcal{L}(z)$ where $f(z)$ is a weight $-\frac{d}{2} +2$ weakly holomorphic modular form on $SL_{2}(\Z)$.  Suppose $g(z)$ vanishes as $z \to 0$ and that there is an $r_{0} \geq 0$ such that
\begin{equation*}
g(it) = O(t e^{r_{0}^2 \pi t}) \quad t \to \infty.
\end{equation*}
Then for $x \in \R^{d}$,
\begin{align*}
c(x)&:= \int_{-1}^{i} g_{T}(z) e^{\pi i |x|^{2}z} dz + \int_{1}^{i} g_{T^{-1}}(z) e^{\pi i |x|^{2}z} dz \\
&-2\int_{0}^{i} g(z) e^{\pi i |x|^{2}z} dz - 2 \int_{i}^{i \infty} g_{S}(z) e^{\pi i |x|^{2}z} dz.
\end{align*}
is a radial Schwartz function and $\widehat{c}(x) =-(-i)^{-\frac{d}{2}} c(x)$.
\end{Prop}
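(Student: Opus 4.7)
The argument closely parallels the proof of Proposition \ref{S3}, with $g = f\mathcal{L}$ playing the role of the $\Gamma(2)$-modular form $\psi_{I}$. Two features are genuinely new: $\mathcal{L}$ is not modular, so the transformation laws (\ref{L}) must be applied explicitly; and $g_{T}\neq g_{T^{-1}}$, so the definition of $c(x)$ is inherently asymmetric in its two finite integrals.

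First I would verify that $c$ is a radial Schwartz function. The asymptotic $\mathcal{L}(z) = \pi iz + 4\log 2 + O(q^{1/2})$ explains the extra linear factor $t$ in the hypothesis $g(it) = O(te^{r_{0}^{2}\pi t})$; combined with the Gaussian decay of $e^{\pi i|x|^{2}z}$ along vertical integration paths, each integrand and each $x$-derivative converges absolutely and decays faster than any polynomial in $|x|$. The vanishing of $g$ at $0$ controls the lower endpoints of the two bounded integrals. Radiality is immediate since the integrands depend on $x$ only through $|x|^{2}$.

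Next I would compute $\widehat{c}(x)$ by applying the Fourier transform termwise via $\mathcal{F}_{x}(e^{\pi i|x|^{2}z}) = (-iz)^{-d/2} e^{\pi i|y|^{2}(-1/z)}$ and then substituting $w = -1/z$ in each of the four integrals. The endpoints permute as $-1 \leftrightarrow 1$, $0 \leftrightarrow i\infty$, with $i$ fixed, and $(-iz)^{-d/2}\,dz = (-i)^{d/2} w^{d/2-2}\,dw$. The crucial transformation identities, derived from (\ref{L}) together with the weight $-d/2+2$ modularity of $f$, are
\begin{align*}
g_{T}(-1/w) &= -w^{-d/2+2}g_{T^{-1}}(w), & g_{T^{-1}}(-1/w) &= -w^{-d/2+2}g_{T}(w), \\
g(-1/w) &= w^{-d/2+2}g_{S}(w), & g_{S}(-1/w) &= w^{-d/2+2}g(w).
\end{align*}
Substituting these, the powers of $w$ cancel exactly, the two finite integrals swap, the two semi-infinite integrals also swap, and the net prefactor is $-(-i)^{d/2}$. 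Because the weight $-d/2+2$ lies in $2\Z$, we have $d\equiv 0\pmod 4$ and $(-i)^{d/2} = (-i)^{-d/2} = \pm 1$, yielding the claimed identity $\widehat{c}(x) = -(-i)^{-d/2}c(x)$.

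The main obstacle is bookkeeping the $\pm\pi i$ shifts in $\mathcal{L}|_{0}T^{\pm 1}$. In Proposition \ref{S3}, the $T^{2}$-invariance of $\psi_{I}$ on $\Gamma(2)$ enforces $\psi_{T} = \psi_{T^{-1}}$, so the same symbol appears on $[-1,i]$ and $[1,i]$; here that symmetry breaks by a discrepancy of $2\pi if$. The asymmetric pairing in the definition of $c(x)$, namely $g_{T}$ on $[-1,i]$ versus $g_{T^{-1}}$ on $[1,i]$, is precisely what makes the Fourier swap $g_{T}(-1/w) \propto g_{T^{-1}}(w)$ reconstitute $c(x)$ cleanly rather than leave behind a stray additive term proportional to $f$. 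Verifying this cancellation, and checking that the two fixed finite endpoints at $i$ introduce no boundary contribution beyond what is already collected in the four defining integrals, is the one genuinely new computation beyond the proof of Proposition \ref{S3}.
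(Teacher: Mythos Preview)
Your proposal is correct and follows essentially the same route as the paper: compute $\widehat{c}$ termwise, substitute $w=-1/z$, and invoke the identities $g_{T}|_{-d/2+2}S=-g_{T^{-1}}$, $g_{T^{-1}}|_{-d/2+2}S=-g_{T}$ (together with $g|S=g_{S}$) to see the four integrals permute back into $c$. Your added commentary on why the asymmetric placement of $g_{T}$ and $g_{T^{-1}}$ is forced by the $\pm\pi i$ in (\ref{L}) is a helpful gloss the paper leaves implicit.
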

\begin{proof}
As before we have
\begin{align*}
\widehat{c}(x) &= \int_{-1}^{i} g_{T}(z) (-iz)^{-\frac{d}{2}} e^{\pi i |x|^{2}\left(-\frac{1}{z} \right)} dz + \int_{1}^{i} g_{T^{-1}}(z) (-iz)^{-\frac{d}{2}}  e^{\pi i |x|^{2}\left(-\frac{1}{z} \right)} dz \\
&-2\int_{0}^{i} g(z) (-iz)^{-\frac{d}{2}}  e^{\pi i |x|^{2}\left(-\frac{1}{z} \right)} dz - 2 \int_{i}^{i \infty} g_{S}(z) (-iz)^{-\frac{d}{2}}  e^{\pi i |x|^{2}\left(-\frac{1}{z} \right)} dz.
\end{align*}
Let $w = -\frac{1}{z}$ to arrive at
\begin{align*}
\widehat{c}(x) &= (-i)^{-\frac{d}{2}} \int_{1}^{i} g_{T} \left( -\frac{1}{w} \right) w^{\frac{d}{2} -2} e^{\pi i |x|^2 w} dw + (-i)^{-\frac{d}{2}} \int_{-1}^{i} g_{T^{-1}} \left( -\frac{1}{w} \right) w^{\frac{d}{2} -2} e^{\pi i |x|^2 w} dw \\
& -2(-i)^{-\frac{d}{2}} \int_{i \infty}^{i} g \left( -\frac{1}{w} \right) w^{\frac{d}{2} -2} e^{\pi i |x|^2 w} dw -2(-i)^{-\frac{d}{2}} \int_{i}^{0} g_{S} \left( -\frac{1}{w} \right) w^{\frac{d}{2} -2} e^{\pi i |x|^2 w} dw.
\end{align*}
By using the transformation properties of $\mathcal{L}$ given in equation \eqref{L} we have that
\begin{align*}
g_{T}|_{-\frac{d}{2} +2} S &= -g_{T^{-1}} \\
g_{T^{-1}}|_{-\frac{d}{2} +2} S &= -g_{T}.
\end{align*}
Using these properties it is clear to see that $\widehat{c}(x) =-(-i)^{-\frac{d}{2}} c(x)$.
\end{proof}

In analogy with the previous propositions we have the following.
\begin{Prop} \label{SL2}
Suppose that $g(z)$ is as in Proposition \ref{SL1}.  Then if $r \geq r_{0}$ we have that
\begin{equation*}
c(r) = -4 \sin^{2} \left(\frac{\pi r^2}{2} \right) \int_{0}^{i \infty} g(z) e^{\pi i r^2 z} dz.
\end{equation*}
\end{Prop}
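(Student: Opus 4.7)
The plan is to mimic the proofs of Propositions \ref{S2} and \ref{S4}, starting from the right-hand side and working backwards to $c(r)$. First, I would expand
\[
-4 \sin^{2}\!\left(\frac{\pi r^{2}}{2}\right) = e^{\pi i r^{2}} - 2 + e^{-\pi i r^{2}}
\]
and split the target expression into three integrals on $[0, i\infty]$ with integrands $g(z) e^{\pi i r^{2}(z+1)}$, $-2 g(z) e^{\pi i r^{2} z}$, and $g(z) e^{\pi i r^{2}(z-1)}$. Shifting $z \mapsto z \mp 1$ in the first and third integrals (translation acts trivially under the slash operator at any weight, since $c = 0$ in the matrices $T^{\pm 1}$) rewrites these as $\int_{1}^{i\infty + 1} g_{T^{-1}}(z) e^{\pi i r^{2} z}\, dz$ and $\int_{-1}^{i\infty - 1} g_{T}(z) e^{\pi i r^{2} z}\, dz$.

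Next, I would deform each shifted contour to a broken path through $i$: $\int_{\pm 1}^{i\infty \pm 1} = \int_{\pm 1}^{i} + \int_{i}^{i\infty \pm 1}$, and replace the upper vertical ray by $\int_{i}^{i\infty}$. This is justified by the decay of the integrand: the hypothesis $g(it) = O(t\, e^{r_{0}^{2} \pi t})$, combined with $e^{\pi i r^{2}(it)} = e^{-\pi r^{2} t}$, gives integrand decay for $r \geq r_{0}$, and the analogous bound for $g_{T^{\pm 1}}$ follows from horizontal-translation invariance of the growth of $\mathcal{L}$ at $i\infty$. The resulting short segments $[\pm 1, i]$, together with $-2 \int_{0}^{i}$, match the first three terms in the definition of $c(r)$.

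It remains to identify the three tail integrals from $i$ to $i\infty$ with the single term $-2 \int_{i}^{i\infty} g_{S}(z) e^{\pi i r^{2} z}\, dz$, which is the heart of the matter. Invoking the transformation rules for $\mathcal{L}$ in \eqref{L} and using that $f$ is invariant under the weight-$(-\tfrac{d}{2}+2)$ slash action of $T^{\pm 1}$ and $S$, we obtain $g_{T^{\pm 1}} = f \cdot (\mathcal{L} - \mathcal{L}_{S} \pm \pi i)$ and $g_{S} = f \mathcal{L}_{S}$. The constants $\pm \pi i$ cancel in $g_{T} + g_{T^{-1}} - 2 g$, leaving $-2 f \mathcal{L}_{S} = -2 g_{S}$, exactly as required.

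The main obstacle I anticipate is the contour deformation. The bound $g(it) = O(t\, e^{r_{0}^{2} \pi t})$ carries an extra polynomial factor absent from the setup of Proposition \ref{S4}, reflecting the linear growth $\mathcal{L}(it) \sim \pi i t$. One must verify that the horizontal arcs at large imaginary part contribute negligibly for $r \geq r_{0}$ (which they do, as the exponential factor $e^{-\pi r^{2} t}$ swamps the linear $t$), and that integrability near $z = 0$ is guaranteed by the vanishing hypothesis on $g$ at the cusp $0$.
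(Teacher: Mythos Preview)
Your proposal is correct and follows essentially the same route as the paper's proof: expand $-4\sin^{2}(\pi r^{2}/2)$, shift the outer integrals by $z\mapsto z\mp1$ to introduce $g_{T^{\mp1}}$, deform the contours through $i$, and then use the identity $g_{T}+g_{T^{-1}}-2g=-2g_{S}$ coming from the transformation law \eqref{L} for $\mathcal{L}$. If anything, you are slightly more careful than the paper in spelling out why the contour deformation is legitimate despite the extra linear factor $t$ in the growth bound.
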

\begin{proof}
By direct calculation we have
\begin{align*}
& -4 \sin^{2} \left(\frac{\pi r^2}{2} \right) \int_{0}^{i \infty} g(z) e^{\pi i r^2 z} dz \\
&= \int_{0}^{i \infty} g(z) e^{\pi i r^2 (z+1)} dz -2\int_{0}^{i \infty} g(z) e^{\pi i r^2 z} dz + \int_{0}^{i \infty} g(z) e^{\pi i r^2 (z-1)} dz \\
&= \int_{1}^{i \infty} g_{T^{-1}}(z) e^{\pi i r^2 z} dz -2\int_{0}^{i \infty} g(z) e^{\pi i r^2 z} dz + \int_{-1}^{i \infty} g_{T}(z) e^{\pi i r^2 z} dz.
\end{align*}
The integrand decays as $z \to i \infty$ so we can deform the path of integration to arrive at
\begin{align*}
&\int_{1}^{i} g_{T^{-1}}(z) e^{\pi i r^2 z} dz -2\int_{0}^{i} g(z) e^{\pi i r^2 z} dz + \int_{-1}^{i} g_{T}(z) e^{\pi i r^2 z} dz \\
&+ \int_{i}^{i \infty} (g_{T^{-1}}(z) + g_{T}(z) -2g(z)) e^{\pi i r^2 z} dz.
\end{align*}
By the properties of $\mathcal{L}$ given in equation \eqref{L} we have 
\begin{equation*}
g_{T^{\pm 1}} = f \mathcal{L}|_{-\frac{d}{2} +2} T^{\pm 1} = f( \mathcal{L} - \mathcal{L}_{S} \pm \pi i).
\end{equation*} 
From this it is clear that $g_{T} + g_{T^{-1}} =2g - 2g_{S}$.  Using this transformation property completes the proof.
\end{proof}
\subsection{The zeros of the Schwartz functions}
The following proposition gives the conditions needed to control the location of the simple zero and when the double zeros begin for the Schwartz functions.
\begin{Prop} \label{SZ}
Assume that the minimal length vector of the lattice of interest has the form $r_{0} = \sqrt{2k}$ for some $k \in \Z$.  If 
\begin{equation*}
g(z) = p(z) +O \left(z^2 e^{2 \pi iz} \right)
\end{equation*}
with
\begin{equation*}
p(z)= c_{0}e^{-r_{0}^{2} \pi i z} + c_{1}ze^{-(r_{0}^{2} -2) \pi iz} + c_{2}e^{-(r_{0}^{2} -2) \pi iz} + \cdots + c_{2k-1}z + c_{2k}
\end{equation*}
where the $c_{j}$ are constants and $c_{0}, c_{2m-1} \neq 0$ for $1 \leq m \leq k$, then if
\begin{equation*}
f(r) = -4\sin^{2} \left( \frac{\pi r^2}{2} \right) \int_{0}^{i \infty} g(z) e^{\pi i r^2 z} dz
\end{equation*}
\begin{align*}
f \left(\sqrt{2k} \right) &= f(r_{0}) = 0, \\
f' \left(\sqrt{2k} \right) &= f'(r_{0}) \neq 0, \\
f \left(\sqrt{2m} \right) &\neq 0 \qquad 0 \leq m \leq k-1.
\end{align*}
\end{Prop}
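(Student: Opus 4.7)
The plan is to split $g(z) = p(z) + h(z)$ where $h(z) := g(z) - p(z) = O(z^{2}e^{2\pi iz})$, and to handle the two pieces separately. The contribution from $h$ is immediate: writing $z = it$, the integrand $h(z) e^{\pi i r^{2}z}$ decays like $t^{2}e^{-\pi(r^{2}+2)t}$, so $G(s) := \int_{0}^{i\infty} h(z) e^{\pi i s z}\, dz$ is a smooth function of $s = r^{2}$. Since $\sin^{2}(\pi s/2)$ has a double zero at every $s = 2m \in 2\mathbb{Z}$, the product $\sin^{2}(\pi r^{2}/2)\, G(r^{2})$ vanishes to order $2$ at each $r = \sqrt{2m}$ and contributes neither to $f(\sqrt{2m})$ nor to $f'(\sqrt{2k})$.

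The essential content lies in the integral of the polar head $p(z)$. Each monomial in $p(z)\, e^{\pi isz}$ is of the form $e^{\pi i\alpha z}$ or $z e^{\pi i\alpha z}$ with $\alpha = s - 2k + 2j$ for some $0 \le j \le k$. After substituting $z = it$ and extending in $\alpha$ by analytic continuation, the elementary evaluations
\begin{align*}
\int_{0}^{i\infty} e^{\pi i\alpha z}\, dz = \frac{i}{\pi\alpha}, \qquad \int_{0}^{i\infty} z\, e^{\pi i\alpha z}\, dz = -\frac{1}{\pi^{2}\alpha^{2}}
\end{align*}
show that $P(s) := \int_{0}^{i\infty} p(z) e^{\pi i s z}\, dz$ is a rational function of $s$. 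Its pole structure is dictated by the exponents appearing in $p$: a simple pole at $s = 2k$ with residue $ic_{0}/\pi$ from $c_{0}$, and for each $1 \le j \le k$ a double pole at $s = 2(k - j)$ from the $c_{2j-1}z$ term (leading coefficient $-c_{2j-1}/\pi^{2}$) together with a simple pole from $c_{2j}$ (leading coefficient $ic_{2j}/\pi$).

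The conclusions follow by expanding $\sin^{2}(\pi s/2) = (\pi^{2}/4)(s - 2m)^{2} + O((s-2m)^{4})$ near each $s = 2m$. For $0 \le m \le k - 1$, the double zero of $\sin^{2}$ exactly cancels the double pole of $P$ at $s = 2m$ (taking $j = k - m$) and kills the accompanying simple pole, so the limit is
\begin{equation*}
f(\sqrt{2m}) = -4 \cdot \frac{\pi^{2}}{4} \cdot \left(-\frac{c_{2(k-m)-1}}{\pi^{2}}\right) = c_{2(k-m)-1},
\end{equation*}
which is nonzero by the hypothesis $c_{2j-1} \ne 0$ for $1 \le j \le k$. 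At $s = 2k$, only the simple pole $ic_{0}/(\pi(s-2k))$ is present, so $\sin^{2}(\pi s/2)\, P(s)$ vanishes to order one in $(s - 2k)$; this gives $f(\sqrt{2k}) = 0$, and expanding $f(r) = -i\pi c_{0}(r^{2} - 2k) + O((r^{2}-2k)^{2})$ and using $ds/dr = 2r$ yields $f'(\sqrt{2k}) = -2\pi i c_{0}\sqrt{2k} \ne 0$.

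The only subtle point, which will be the main obstacle to make rigorous, is that the integral defining $f(r)$ diverges for $r^{2} < 2k$. The identity must therefore be interpreted via analytic continuation in $r^{2}$, which is legitimate because the rational poles of $P(s)$ are exactly matched by the zeros of $\sin^{2}(\pi s/2)$, so $\sin^{2}(\pi s/2)(G(s) + P(s))$ is entire in $s$. Alternatively, one can invoke the Schwartz-function definitions from Propositions \ref{S1}, \ref{S3}, or \ref{SL1}, use Propositions \ref{S2}, \ref{S4}, or \ref{SL2} to identify $f$ with the $\sin^{2}$-times-integral formula on the range $r \ge r_{0}$ where the integral converges, and then extend by the identity theorem. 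Once this continuation is justified, the argument reduces to the explicit residue bookkeeping above.
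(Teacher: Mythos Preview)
Your proof is correct and follows essentially the same approach as the paper: substitute $z = it$, split off the polar head $p(z)$ from the remainder, compute the Laplace transforms of $p$ term by term to obtain a rational function of $r^{2}$ with poles at $2m$ for $0 \le m \le k$, and read off the behaviour at each $\sqrt{2m}$ after multiplication by $\sin^{2}(\pi r^{2}/2)$. Your write-up is in fact more careful than the paper's brief argument, which neither addresses the divergence of the integral for $r < r_{0}$ nor makes the residue bookkeeping (your explicit values $f(\sqrt{2m}) = c_{2(k-m)-1}$ and $f'(\sqrt{2k}) = -2\pi i c_{0}\sqrt{2k}$) fully explicit.
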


\begin{proof}
If we make the substitution $z = it$ then
\begin{equation*}
f(r) = -i^{\frac{d}{2} -1} 4 \sin^{2} \left( \frac{\pi r^{2}}{2} \right) \left[\int_{0}^{\infty} p(it) e^{-\pi r^{2}t} dt + \int_{0}^{\infty} \left( g(it)- p(it) \right)e^{-\pi r^{2}t} dt \right].
\end{equation*}
We have that
\begin{align*}
\int_{0}^{\infty} &p(it) e^{-\pi r^{2} t} dt = \int_{0}^{\infty} \left(c_{0} e^{r_{0}^{2} \pi t} + ic_{1} t e^{(r_{0}^{2} -2) \pi t} + \cdots + i c_{2k-1}t + c_{2k} \right) e^{-\pi r^{2} t} dt \\
&= \frac{c_{0}}{\pi (r^{2} - r_{0}^{2})} + \frac{ic_{1}}{\pi^{2} (r^{2} - r_{0}^{2} +2)^{2}} + \frac{c_{2}}{\pi (r^2 - r_{0}^{2} +2)} + \cdots + \frac{ic_{2k-1}}{\pi^{2} r^{4}} + \frac{c_{2k}}{\pi r^2}.
\end{align*}
When this term is multiplied by $\sin^{2} \left( \frac{\pi r^{2}}{2} \right)$ it is clear that we get a zero at $r=r_{0}=\sqrt{2k}$ and that $a \left( \sqrt{2m} \right) \neq 0$ form $0 \leq m \leq k-1$.  The first term also ensures that the zero at $r=r_{0}$ only has order one.  It is also clear that $a(r)$ has double zeros at $r= \sqrt{2m}$ for $m > k$.

\end{proof}
To use this for the $+1$ eigenfunction we replace $g(z)$ by $\phi \left(-\frac{1}{z} \right) z^{\frac{d}{2}-2}$.  To use it for the $-1$ eigenfunction we replace $g(z)$ by $\psi(z)$.

\section{Proof of Theorem \ref{8}}
\subsection{The $+1$ eigenfunction}
In this section we will study when it is possible to construct the $+1$ eigenfunctions.  Let $d \equiv 0 \pmod{8}$.  We can assume that our quasi-modular form $\phi(z)$ is always a holomorphic quasi-modular form divided by some power of $\Delta(z)$.  The conditions given above are equivalent to demanding that
\begin{equation*}
\widetilde{\phi}(z) = \Delta^{n}(z) \phi(z)
\end{equation*}
is a weight $-\frac{d}{2} +4 +12n$ quasi-modular form of depth $2$ on $SL_{2}(\Z)$ such that $\widetilde{\phi}(z) = O(q^{n+1})$ with $n$ minimum.  All such forms are of the form
\begin{equation*}
\widetilde{\phi}(z) = \sum_{i \geq 1} \alpha_{i} E_{2}^{a_{i}}(z) E_{4}^{b_{i}}(z) E_{6}^{c_{i}}(z)
\end{equation*}
with atleast one $a_{i} =2$, all $a_{i} \leq 2$, and $2a_{i} + 4b_{i} + 6c_{i} = -\frac{d}{2} +4 +12n$ for all $i$.  Equivalently
\begin{equation*}
\widetilde{\phi}(z) \in E_{2}^2 M_{-\frac{d}{2} + 12n}(SL_{2}(\Z)) \bigoplus E_{2} M_{-\frac{d}{2} +2 + 12n}(SL_{2}(\Z)) \bigoplus M_{-\frac{d}{2} + 4 +12n}(SL_{2}(\Z)).
\end{equation*}
The number of such forms is 
\begin{equation*}
\delta_{d,n} := {\rm{dim}}\left(M_{-\frac{d}{2} +4 + 12n}(SL_{2}(\Z)) \right) + {\rm{dim}}\left(M_{-\frac{d}{2} +2 + 12n}(SL_{2}(\Z))\right) + {\rm{dim}}\left(M_{-\frac{d}{2} + 12n}(SL_{2}(\Z))\right),
\end{equation*}
and  it is well-known that
\begin{equation*}
{\rm{dim}}(M_{k}(SL_{2}(\Z))) = \begin{cases} \left\lfloor \frac{k}{12} \right\rfloor +1 & k \not\equiv 2 \pmod{12} \\
\left\lfloor \frac{k}{12} \right\rfloor & k \equiv 2 \pmod{12}. \end{cases} \end{equation*}
A short calculation shows that $\delta_{d,n} = 3n-\frac{d}{8} +2$.  In order to ensure that $\widetilde{\phi}(z) = O(q^{n+1})$ there needs to be a nontrivial solution to a system of $n+1$ homogeneous equations with $3n-\frac{d}{8}+2$ variables.  Therefore, we must have $2n>\frac{d}{8} -1$.
\begin{Ex}
For $d=8$ we can let $n=1$ and find that $\widetilde{\phi}(z) = E_{2}^2 E_{4}^2 - 2E_{2}E_{4}E_{6} + E_{6}^2$ which matches the function found in \cite{V}. 

For $d=48$ we can let $n=3$ and find 
\begin{align*}
\widetilde{\phi}(z)&=\Delta^{3}(z) \phi(z)= 1556796748E_{2}^{2}(z) E_{4}^{3}(z) -77235475E_{2}^{2}(z)E_{6}^{2}(z) \\
&-704733786E_{2}(z) E_{4}^{2}(z) E_{6}(z) -1029088507E_{4}^{4}(z) +254261020E_{4}(z) E_{6}^{2}(z) \\
 &= -1673465440313507328q^{4} +O(q^5).
\end{align*}
Dimension $d=48$ is especially interesting as the bound given by the $+1$ eigenfunction in this case exactly matches the lower bound given by the even unimodular lattice $P_{48n}$.
\end{Ex}

\subsection{The $-1$ eigenfunction}
We will follow the same basic argument as in the previous section.  Proposition \ref{S3} and Proposition \ref{SL1} show that the modular function, $\psi(z)$, we use to construct our Schwartz function must be a sum of a modular form $g$ of weight $-\frac{d}{2}+2$ on $\Gamma(2)$ such that $g=g_{T}+g_{S}$ and a function of the form $f \mathcal{L}$ where $f$ is a modular form of weight $-\frac{d}{2}+2$ on $SL_{2}(\Z)$.  In \cite{ckmrv2} it was shown that this is equivalent to
\begin{equation*}
\widetilde{\psi}(z) \in \left(U^2 - V^2 \right) M_{-\frac{d}{2}-2+12n}(SL_{2}(\Z)) \bigoplus WM_{-\frac{d}{2} +12n}(SL_{2}(\Z)) \bigoplus \mathcal{L} M_{-\frac{d}{2}+2 +12n}(SL_{2}(\Z)),
\end{equation*}
where $\widetilde{\psi}(z) = \Delta^{n}(z) \psi(z)$.  We now want to choose a $\widetilde{\psi}(z)$ in this space such that $\widetilde{\psi}(z)$ has a constant term without a $z$.  This ensures that the Schwartz function will have a simple zero at $\sqrt{2n}$.  We also need to ensure $\widetilde{\psi}_{S}(z) = O(q^{n + \frac{1}{2}})$ so that that $\psi(z)$ vanishes as $z \to 0$.  $\widetilde{\psi}_{S}(z)$ is only supported on half-integral exponents so this gives a system of $n+2$ homogeneous equations.  Let
\begin{equation*}
\delta_{d,n}':={\rm{dim}}\left(M_{-\frac{d}{2} -2 + 12n}(SL_{2}(\Z)) \right) + {\rm{dim}}\left(M_{-\frac{d}{2}+ 12n}(SL_{2}(\Z))\right) + {\rm{dim}}\left(M_{-\frac{d}{2} +2+ 12n}(SL_{2}(\Z))\right),
\end{equation*}
then a short computation shows $\delta_{d,n}'= 3n - \frac{d}{8} +1$ and so to guarantee a nontrivial solution we must have $2n \geq \frac{d}{8} +1$.
\begin{rmk}
One can ignore the contribution from $\mathcal{L}$ for $d=8$ and $d=24$ and get the same minimal value for $n$.  For example, for $d=8$ using the method described above we find
\begin{equation*}
\widetilde{\psi}(z) = \frac{1}{3} \left(U^2 - V^2 \right) E_{6} + \frac{2}{3} W E_{4}^2,
\end{equation*} which is equal to the form used in \cite{V}.  For this reason $\mathcal{L}$ did not show up in the constructions in \cite{V} or \cite{ckmrv}.
\end{rmk}
\begin{rmk}
This also shows that for $d=48$ the minimal possible $n$ is $n=4$.  Therefore, one cannot match the function found on the ``plus" side and resolve the sphere packing problem for $d=48$ using this method.
\end{rmk}
The following table gives the sphere packing upper bounds for some dimensions obtained using the functions constructed here.  

\begin{center}
\begin{tabular}{ |c|c|c|c| } 
 \hline
d & Given upper bound & Best upper bound & Best lower bound \\
\hline
$8$ & $ 0.2537$ & $0.2537$ & $ 0.2537$ \\
\hline
$16$ & $ 0.23533$ & $0.02519$ & $0.01471$ \\
\hline
$24$ & $0.0019$ & $0.0019$ & $0.0019$ \\
\hline
$48$ & $ 2.310 \times 10^{-5}$ & $4.529 \times 10^{-7}$ & $2.318 \times 10^{-8}$ \\
\hline
$72$ & $4.495 \times 10^{-10}$ & $6.736 \times 10^{-11}$ & $1.459 \times 10^{-13}$ \\
\hline
$96$ & $7.666 \times 10^{-12}$ & $7.945 \times 10^{-15}$ & $2.795 \times 10^{-22}$ \\
 \hline
\end{tabular}
\end{center}

\end{document}